\begin{document}

 \newtheorem{thm}{Theorem}[section]
 \newtheorem{cor}[thm]{Corollary}
 \newtheorem{lem}[thm]{Lemma}{\rm}
 \newtheorem{prop}[thm]{Proposition}

 \newtheorem{defn}[thm]{Definition}{\rm}
 \newtheorem{assumption}[thm]{Assumption}
 \newtheorem{rem}[thm]{Remark}
 \newtheorem{ex}{Example}
\numberwithin{equation}{section}

\def\la{\langle}
\def\ra{\rangle}
\def\e{{\rm e}}
\def\x{\mathbf{x}}
\def\by{\mathbf{y}}
\def\bz{\mathbf{z}}
\def\F{\mathcal{F}}
\def\R{\mathbb{R}}
\def\T{\mathbf{T}}
\def\N{\mathbb{N}}
\def\K{\mathbf{K}}
\def\bK{\overline{\mathbf{K}}}
\def\Q{\mathbf{Q}}
\def\M{\mathbf{M}}
\def\O{\mathbf{O}}
\def\C{\mathbb{C}}
\def\P{\mathbf{P}}
\def\Z{\mathbb{Z}}
\def\H{\mathcal{H}}
\def\A{\mathbf{A}}
\def\V{\mathbf{V}}
\def\AA{\overline{\mathbf{A}}}
\def\B{\mathbf{B}}
\def\c{\mathbf{C}}
\def\L{\mathbf{L}}
\def\bS{\mathbf{S}}
\def\H{\mathcal{H}}
\def\I{\mathbf{I}}
\def\Y{\mathbf{Y}}
\def\X{\mathbf{X}}
\def\G{\mathbf{G}}
\def\f{\mathbf{f}}
\def\z{\mathbf{z}}
\def\y{\mathbf{y}}
\def\d{\hat{d}}
\def\bx{\mathbf{x}}
\def\y{\mathbf{y}}
\def\v{\mathbf{v}}
\def\g{\mathbf{g}}
\def\w{\mathbf{w}}
\def\b{\mathcal{B}}
\def\a{\mathbf{a}}
\def\q{\mathbf{q}}
\def\u{\mathbf{u}}
\def\s{\mathcal{S}}
\def\cc{\mathcal{C}}
\def\co{{\rm co}\,}
\def\cp{{\rm CP}}
\def\tg{\tilde{f}}
\def\tx{\tilde{\x}}
\def\supmu{{\rm supp}\,\mu}
\def\supnu{{\rm supp}\,\nu}
\def\m{\mathcal{M}}
\def\C{\mathcal{C}}
\def\s{\mathcal{S}}
\def\k{\mathcal{K}}
\def\la{\langle}
\def\ra{\rangle}

\title[nonnegativity]{New approximations for the cone of copositive matrices and its dual}
\author{Jean B. Lasserre}
\address{LAAS-CNRS and Institute of Mathematics\\
University of Toulouse\\
LAAS, 7 avenue du Colonel Roche\\
31077 Toulouse C\'edex 4,France}
\email{lasserre@laas.fr}
\date{}

\begin{abstract}
We provide convergent hierarchies for the convex cone $\cc$ of 
copositive matrices and its dual $\cc^*$, the cone of completely positive matrices. 
In both cases the corresponding hierarchy consists of nested spectrahedra
and provide outer (resp. inner) approximations for $\cc$ (resp. for its dual $\cc^*$), thus complementing
previous inner (resp. outer) approximations for $\cc$ (for $\cc^*$). In particular, both inner and outer approximations
have a very simple interpretation. Finally, extension to $\k$-copositivity and $\k$-complete positivity
for a closed convex cone $\k$, is straightforward.
\end{abstract}

\keywords{copositive matrices; completely positive matrices; semidefinite relaxations}

\subjclass{15B48 90C22}

\maketitle

\section{Introduction}

In recent years the convex cone $\cc$ of {\it copositive} matrices
and its dual cone $\cc^*$ of {\it completely positive} matrices 
have attracted a lot of attention,
in part because several interesting NP-hard problems can be modelled as convex conic optimization
problems over those cones. For a survey of results and discussion on $\cc$ and its dual, the interested reader is referred to e.g. Anstreicher and Burer \cite{anstreicher}, Bomze \cite{bomze1}, Bomze et al. \cite{bomze2},
Burer \cite{burer}, D\"ur \cite{durr} and Hiriart-Urruty and Seeger \cite{jbhu-sirev}..

As optimizing over $\cc$ (or its dual) is in general difficult, a
typical approach is to optimize over simpler and more tractable cones. In particular,
nested hierarchies of tractable convex cones $\mathcal{C}_k$, $k\in\N$, that provide {\it inner} approximations of $\cc$ have been proposed, notably by Parrilo \cite{parrilo}, deKlerk and Pasechnik \cite{de-pasech}, Bomze and deKlerk \cite{bomze}, as well
as Pe\~na et al. \cite{pena}. For example, denoting by 
$\mathcal{N}$ (resp. $\mathcal{S}_+$) the convex cone of nonnegative (resp. positive semidefinite) matrices,
the first cone in the hierarchy of \cite{de-pasech} is $\mathcal{N}$, and $\mathcal{N}+\mathcal{S}_+$ in that of \cite{parrilo},
whereas the hierarchy of \cite{pena} is in sandwich between that of \cite{de-pasech} and \cite{parrilo}.
Of course, to each such hierarchy of inner approximations $(\mathcal{C}_k)$, $k\in\N$, 
of $\cc$, one may associate the hierarchy $(\mathcal{C}^*_k)$, $k\in\N$, of dual cones 
which provides outer approximations of $\cc^*$. However, quoting D\"ur in \cite{durr}: ``We are not aware of comparable approximation schemes that approximate
the completely positive cone (i.e. $\cc^*$) from the interior."

Let us also mention Gaddum's characterization
of copositive matrices described in Gaddum \cite{gaddum} for which the set membership problem ``$\A\in\cc$" (with $\A$ known)
reduces to solving a linear programming (LP) problem (whose size is not polynomially bounded in the input size of the matrix $\A$).
However, Gaddum's characterization is {\it not} convex in the coefficients of the matrix $\A$, and so
cannot be used to provide a hierarchy of outer approximations of $\cc$. On the other hand,
de Klerk and Pasechnik \cite{de-pasek} have used Gaddum's characterization to help solve 
quadratic optimization problems on the simplex
via solving a hierarchy of LP-relaxations of increasing size and with {\it finite} convergence of the process.

{\bf Contribution.} The contribution of this note is precisely to describe an explicit hierarchy of tractable convex cones that provide
{\it outer} approximations of $\cc$ which converge monotonically and asymptotically to $\cc$. And so, by duality, the corresponding hierarchy of dual cones
provides {\it inner} approximations of $\cc^*$ converging to $\cc^*$, answering D\"ur's question and also showing  that
$\cc$ and $\cc^*$ can be sandwiched between two converging hierarchies of tractable convex cones. The present outer approximations are {\it not} polyhedral and hence are different from the outer polyhedral approximations 
$\mathcal{O}^n_r$ of $\cc$ defined in Yildirim \cite{yildirim}. $\mathcal{O}^n_r$ are based on a certain discretization $\Delta(n,r)$ of the simplex $\Delta_n:=\{\x\in\R^n_+:\,\e^T \x=1\}$ whose size is parametrized by $r$. All matrices associated with quadratic forms nonnegative on $\Delta(n,r)$ belong to $\mathcal{O}^r_n$ for all $r$, and $\cc=\cap_{r=1}^\infty\mathcal{O}^r_n$;
Combining with the inner approximations of de Klerk and Pasechnik
\cite{de-pasech}, the author provides tight bounds on the gap between upper and lower bounds
for quadratic optimization problems;  for more details the interested reader is
referred to Yildirim \cite{yildirim}.

In fact, our result is a specialization of a more general result of \cite{lasserre} about nonnegativity of polynomials 
on a closed set $\K\subset\R^n$, in the specific context of quadratic forms and $\K=\R^n_+$.
In such a context, and identifying copositive matrices with 
quadratic forms nonnegative on $\K=\R^n_+$, this specialization yields 
inner approximations for $\cc^*$  with a very simple interpretation directly related to the definition of $\cc^*$, 
which might be of interest for the community interested in $\cc$ and $\cc^*$ but
might be ``lost" in the general case treated in \cite{lasserre}, whence the present note.
Finally, following Burer \cite{burer},  and $\k\subset\R^n$ being a closed convex cone,
one may also consider the convex cone $\cc_\k$
of $\k$-copositive matrices, i.e., real symmetric matrices $\A$ such that
$\x^T\A\x\geq0$ on $\k$, and its dual cone $\cc^*_\k$ of $\k$-completely positive matrices.
Then the outer approximations previously defined have an immediate and straightforward analogue
(as well as the inner approximations of the dual).
%Therefore, in view of the recent new interest in $\cop$ and its dual, we think that it deserves a special treatment.
%Moreover, each

\section{Main result}
\subsection{Notation and definitions}
Let $\R[\x]$ be the ring of polynomials in the variables
$\x=(x_1,\ldots,x_n)$.
Denote by $\R[\x]_d\subset\R[\x]$ the vector space of
polynomials of degree at most $d$, which forms a vector space of dimension $s(d)={n+d\choose d}$, with e.g.,
the usual canonical basis $(\x^\alpha)$ of monomials.
Also, let $\N^n_d:=\{\alpha\in\N^n\,:\,\sum_i\alpha_i\leq d\}$ and
denote by $\Sigma[\x]\subset\R[\x]$ (resp. $\Sigma[\x]_d\subset\R[\x]_{2d}$)
the space of sums of squares (s.o.s.) polynomials (resp. s.o.s. polynomials of degree at most $2d$). 
If $f\in\R[\x]_d$, write
$f(\x)=\sum_{\alpha\in\N^n_d}f_\alpha \x^\alpha$ in the canonical basis and
denote by $\f=(f_\alpha)\in\R^{s(d)}$ its vector of coefficients. Finally, let $\s^n$ denote the space of 
$n\times n$ real symmetric matrices, with inner product $\la \A,\B\ra ={\rm trace}\,\A\B$, and where the notation
$\A\succeq0$ (resp. $\A\succ0$) stands for $\A$ is positive semidefinite.

Given $\K\subseteq\R^n$, denote by ${\rm cl}\,\K$ (resp. ${\rm conv}\,\K$) the closure (resp. the convex hull) of $\K$.
Recall that given a convex cone $\K\subseteq\R^n$, the convex cone
$\K^*=\{\y\in\R^n: \la\y,\x\ra\geq0\:\forall\x\in\K\}$ is called the {\it dual} cone of $\K$, and satisfies
$(\K^*)^*={\rm cl}\,\K$ . Moreover, given
two convex cones $\K_1,\K_2\subseteq\R^n$,
\[\begin{array}{ccccc}
\K_1^*\,\cap\,\K_2^*&=&(\K_1+\K_2)^*&=&(\K_1\cup\K_2)^*\\
(\K_1\cap\K_2)^*&=&{\rm cl}\,(\K_1^*+\K_2^*)&=&{\rm cl}\,({\rm conv}\,(\K_1^*\cup\K_2^*)\,).\end{array}\]
See for instance Rockafellar \cite[Theorem 3.8; Corollary 16.4.2]{rockafellar}.

\subsection*{Moment matrix} With a sequence $\y=(y_\alpha)$, $\alpha\in\N^n$,
let $L_\y:\R[\x]\to\R$ be the linear functional
\[h\quad (=\sum_{\alpha}h_{\alpha}\,\x^\alpha)\quad\mapsto\quad
L_\y(h)\,=\,\sum_{\alpha}h_{\alpha}\,y_{\alpha},\quad h\in\R[\x].\]
With $d\in\N$, let $\M_d(\y)$ be the symmetric matrix with rows and columns indexed 
in $\N^n_d$, and defined by:
\begin{equation}
\label{moment}\M_d(\y)(\alpha,\beta)\,:=\,L_\y(\x^{\alpha+\beta})\,=\,y_{\alpha+\beta},\quad (\alpha,\beta)\in\N^n_d\times \N^n_d.\end{equation}

The matrix $\M_d(\y)$ is called the moment matrix associated with $\y$, and it is straightforward to check that
\[\left[\,L_\y(g^2)\geq0\quad\forall g\in\R[\x]\,\right]\quad\Leftrightarrow\quad\M_d(\y)\,\succeq\,0,\quad d=0,1,\ldots.\]

\subsection*{Localizing matrix} Similarly, with $\y=(y_{\alpha})$, $\alpha\in\N^n$,
and $f\in\R[\x]$ written
\[\x\mapsto f(\x)\,=\,\sum_{\gamma\in\N^n}f_{\gamma}\,\x^\gamma,\]
let $\M_d(f\,\y)$ be the symmetric matrix with rows and columns indexed in $\N^n_d$,
and defined by:
\begin{equation}
\label{localizing}
\M_d(f\,\y)(\alpha,\beta)\,:=\,L_\y\left(f(\x)\,\x^{\alpha+\beta}\right)\,=\,\sum_{\gamma}f_{\gamma}\,
y_{\alpha+\beta+\gamma},\qquad (\alpha,\beta)\in\N^n_d\times \N^n_d.\end{equation}
The matrix $\M_d(f\,\y)$ is called the localizing matrix associated with $\y$ and $f\in\R[\x]$.
%, and it is straightforward to check that\[\left[\,L_\y(g^2\,f)\geq0\quad\forall g\in\R[\x]\,\right]\quad\Leftrightarrow\quad\M_d(f\,\y)\,\succeq\,0,\quad d=0,1,\ldots,\]
Observe that
\begin{equation}
\label{local1}
\la \g,\M_d(f\,\y)\g\ra\,=\,L_\y(g^2\,f),\qquad \forall g\in\R[\x]_d,
\end{equation}
and so if $\y$ has a representing finite Borel measure $\mu
$, i.e., if
\[y_\alpha\,=\,\int_{\R^n} \x^\alpha\,d\mu,\qquad \forall\,\alpha\in\N^n,\]
then (\ref{local1}) reads
\begin{equation}
\label{local2}
\la \g,\M_d(f\,\y)\g\ra\,=\,L_\y(g^2\,f)\,=\,\int_{\R^n} g(\x)^2f(\x)\,d\mu(\x),\qquad \forall g\in\R[\x]_d.
\end{equation}
Actually, the localizing matrix $\M_d(f\,\y)$ is nothing less than the moment matrix associated with the 
sequence $\z=f\,\y=(z_\alpha)$, $\alpha\in\N^n$, with $z_\alpha=\sum_\gamma f_\gamma y_{\alpha+\gamma}$.
In particular, if $f$ is nonnegative on the support $\supmu$ of $\mu$ then
the localizing matrix $\M_d(f\,\y)$ is just the moment matrix associated
with the finite Borel measure $d\mu_f=fd\mu$, absolutely continuous with respect to $\mu$ (denoted $\mu_f\ll\mu$), and with density $f$.
For instance, with $d=1$ one may write
\[\M_1(f\,\y)\,=\,
\displaystyle\int_{\R^n}\:\left[\begin{array}{ccc}1&\vert &\x^T\\ -& &-\\ \x &\vert &\x\x^T\end{array}\right]\,f(\x)\,d\mu(\x)\,=\,
\displaystyle\int_{\R^n}\:\left[\begin{array}{ccc}1&\vert &\x^T\\ -& &-\\ \x &\vert &\x\x^T\end{array}\right]\,d\mu_f(\x),\]
or, equivalently,
\begin{equation}
\label{second}
\M_1(f\,\y)\,=\,\mbox{mass($\mu_f$)}\times\,
\displaystyle\left[\begin{array}{ccc}1&\vert &{\rm E}_{\tilde{\mu}_f}(\x)^T\\ -& &-\\ {\rm E}_{\tilde{\mu}_f}(\x) &\vert & 
{\rm E}_{\tilde{\mu}_f}(\x\x^T)\end{array}\right],\end{equation}
where ${\rm E}_{\tilde{\mu}_f}(\cdot)$ denotes the expectation operator associated with the normalization $\tilde{\mu}_f$ of $\mu_f$
(as a probability measure), and ${\rm E}_{\tilde{\mu}_f}(\x\x^T)$
denotes the matrix of noncentral second-order moments of $\tilde{\mu}_f$ 
(and the covariance matrix of $\tilde{\mu}_f$ if ${\rm E}_{\tilde{\mu}_f}(\x)=0$).

\subsection{Main result}
In \cite[Theorem 3.2]{lasserre} the author has shown in a general context 
that a polynomial $f\in\R[\x]$ is nonnegative on a closed set $\K\subseteq\R^n$ if and only if
\begin{equation}
\label{aux1}
\int_\K g^2\,f\,d\mu\,\geq\,0,\qquad\forall g\in\R[\x],\end{equation}
where $\mu$ is a given finite Borel measure with support $\supmu$ being exactly $\K$; if $\K$
is compact then $\mu$ is arbitrary whereas if $\K$ is not compact then $\mu$ has to satisfy a 
certain growth condition on its moments. If $\y=(y_\alpha)$, $\alpha\in\N^n$,
is the sequence of moments of $\mu$ then  (\ref{aux1}) is in turn equivalent to 
\[\M_d(f\,\y)\,\succeq\,0,\qquad\forall d=0,1,\ldots,\]
where $\M_d(f\,\y)$ is the localizing matrix associated with $f$ and $\y$, defined in (\ref{localizing}).
In this section we particularize this result to the case of copositive matrices viewed
as homogeneous forms of degree $2$, nonnegative on the closed set $\K=\R^n_+$.\\

So with $\A=(a_{ij})\in\s^n$, let denote by $f_\A\in\R[\x]_2$ the 
quadratic form $\x\mapsto \x^T\A\x$, and let $\mu$ be the 
joint probability measure associated with $n$ {\it i.i.d.} exponential variates (with mean $1$), with support $\supmu=\R^n_+$,
and with moments $\y=(y_\alpha)$, $\alpha\in\N^n$, given by:
\begin{equation}
\label{mom}
y_\alpha=\int_{\R^n_+} \x^\alpha\,d\mu(\x)\,=\,\int_{\R^n_+} \x^\alpha\,\exp(-\sum_{i=1}^nx_i)\,d\x\,=\,\prod_{i=1}^n\alpha_i{\rm!},\qquad\forall\,\alpha\in\N^n.\\
\end{equation}
Recall that a matrix $\A\in\s^n$ is copositive if $f_\A(\x)\geq0$ for all $\x\in\R^n_+$, and denote by $\cc\subset\s^n$
the cone of copositive matrices, i.e.,
\begin{equation}
\label{defcop}
\cc\,:=\,\{\:\A\in\s^n\::\: f_\A(\x)\,\geq\,0\quad \forall\,\x\in\R^n_+\:\}.
\end{equation}
Its dual cone is the closed convex cone of
{\it completely positive}, i.e., matrices of $\s^n$ that can be written as
the sum of finitely many rank-one matrices $\x\x^T$, with $\x\in\R^n_+$, i.e.,
\begin{equation}
\label{cstar}
\cc^*\,=\,{\rm conv}\,\{\:\x\x^T\::\:\x\in\R^n_+\:\}.
\end{equation}
Next, introduce the following sets $\C_d\subset\s^n$, $d=0,1,\ldots$, defined by:
\begin{equation}
\label{def-approxd}
\C_d\,:=\,\{\:\A\in\s^n\::\:\M_d(f_\A\,\y)\,\succeq\,0\:\},\qquad d=0,1,\ldots
\end{equation}
where $\M_d(f_\A\,\y)$ is the localizing matrix defined in (\ref{localizing}), associated with the 
quadratic form $f_\A$ and the sequence $\y$ in (\ref{mom}). 

Observe that in view of the definition (\ref{localizing}) of the localizing matrix, the entries of
the matrix $\M_d(f_\A\,\y)$ are homogeneous and linear in $\A$. Therefore, 
the condition $\M_d(f_\A\,\y)\succeq0$ is a homogeneous Linear Matrix Inequality (LMI) and defines a closed 
convex cone (in fact a spectrahedron) of $\s^n$ (or equivalently, of $\R^{n(n+1)/2}$).
Each $\C_d\subset\s^n$ is a convex cone defined solely in terms of the entries $(a_{ij})$ of $\A\in\s^n$,
and the hierarchy of spectrahedra  $(\C_d)$, $d\in\N$, provides a nested sequence of outer approximations of $\cc$. 

\begin{thm}
\label{thmain}
Let $\y$ be as in (\ref{mom}) and let $\mathcal{C}_d\subset\s^n$, $d=0,1,\ldots$, be the hierarchy of convex cones defined in
(\ref{def-approxd}).
 Then $\C_0\supset\C_1\cdots\supset\C_d\cdots\supset\cc$ and $\cc=\displaystyle\bigcap_{d=0}^\infty\C_d$.
\end{thm}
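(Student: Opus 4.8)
The plan is to prove the three assertions in turn; the first two are elementary consequences of the structure of the localizing matrix together with the representation (\ref{local2}), while the last is an immediate application of the general nonnegativity criterion \cite[Theorem 3.2]{lasserre} recalled above.

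First I would establish the nesting $\C_d\supseteq\C_{d+1}$. By the indexing convention in (\ref{localizing}), the matrix $\M_d(f_\A\,\y)$ is exactly the principal submatrix of $\M_{d+1}(f_\A\,\y)$ obtained by retaining the rows and columns indexed by $\N^n_d\subset\N^n_{d+1}$. Since every principal submatrix of a positive semidefinite matrix is positive semidefinite, $\M_{d+1}(f_\A\,\y)\succeq0$ forces $\M_d(f_\A\,\y)\succeq0$; hence $\A\in\C_{d+1}$ implies $\A\in\C_d$, i.e. $\C_{d+1}\subseteq\C_d$. For the inclusion $\cc\subseteq\C_d$, let $\A\in\cc$, so that $f_\A\geq0$ on $\R^n_+$. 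Since $\mu$ in (\ref{mom}) has support exactly $\supmu=\R^n_+$, the identity (\ref{local2}) gives, for every $g\in\R[\x]_d$ with coefficient vector $\g$,
\[\la\g,\M_d(f_\A\,\y)\g\ra\,=\,\int_{\R^n_+}g(\x)^2\,f_\A(\x)\,d\mu(\x)\,\geq\,0,\]
because the integrand is nonnegative on $\supmu$. Thus $\M_d(f_\A\,\y)\succeq0$, i.e. $\A\in\C_d$; as $d$ is arbitrary, $\cc\subseteq\bigcap_{d}\C_d$.

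For the reverse inclusion $\bigcap_d\C_d\subseteq\cc$, fix $\A\in\bigcap_d\C_d$, so that $\M_d(f_\A\,\y)\succeq0$ for all $d$. Every $g\in\R[\x]$ lies in $\R[\x]_d$ for $d$ large enough, so reading (\ref{local2}) in the other direction shows that the family of conditions $\M_d(f_\A\,\y)\succeq0$, $d=0,1,\ldots$, is equivalent to
\[\int_{\R^n_+}g^2\,f_\A\,d\mu\,\geq\,0,\qquad\forall g\in\R[\x].\]
Applying \cite[Theorem 3.2]{lasserre} with $f=f_\A$ and $\K=\R^n_+$ then yields $f_\A\geq0$ on $\R^n_+$, that is, $\A\in\cc$. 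Combined with the previous paragraph this gives $\cc=\bigcap_d\C_d$.

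The main obstacle is the verification that $\mu$ genuinely meets the hypotheses of \cite[Theorem 3.2]{lasserre}, on which the last step rests. Two points must be checked: that $\supmu$ equals $\R^n_+$ exactly, which holds because $\mu$ is the product of $n$ exponential laws, each supported on $[0,\infty)$; and, since $\R^n_+$ is noncompact, that $\mu$ satisfies the required growth condition on its moments. Here the light tails of the exponential law are decisive: one computes $\int_{\R^n_+}\exp(c\sum_i x_i)\,d\mu(\x)=(1-c)^{-n}<\infty$ for every $0<c<1$, which forces the moments $y_\alpha=\prod_{i}\alpha_i!$ to obey Carleman's condition and makes $\mu$ moment-determinate, precisely the growth requirement invoked in the cited theorem. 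Finiteness of every $y_\alpha$ moreover guarantees that each integral $\int g^2 f_\A\,d\mu$ is a well-defined finite combination of the $y_\alpha$, so that all the displayed identities are legitimate.
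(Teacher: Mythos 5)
Your proposal is correct and follows essentially the same route as the paper: the paper proves the theorem by directly invoking the nonnegativity criterion of \cite{lasserre} (Theorem 3.3 there, the localizing-matrix form of Theorem 3.2) with $\K=\R^n_+$ and $f=f_\A$, which is exactly your reduction via (\ref{local2}) to the integral condition (\ref{aux1}). The additional details you supply --- the nesting $\C_{d+1}\subseteq\C_d$ via principal submatrices, the easy inclusion $\cc\subseteq\C_d$, and the verification that the exponential measure has support exactly $\R^n_+$ and satisfies the moment-growth (Carleman-type) hypothesis --- are precisely the routine checks the paper leaves implicit in its citation, so the two proofs coincide in substance.
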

The proof is a direct consequence of \cite[Theorem 3.3]{lasserre} with $\K=\R^n_+$ and $f=f_\A$.
Since $f_\A$ is homogeneous, alternatively one may use the probability measure $\nu$ uniformly supported on the $n$-dimensional simplex $\Delta=\{\x\in\R^n_+:\sum_ix_i\leq1\}$ and invoke \cite[Theorem 3.2]{lasserre}. The moments 
$\tilde{\y}=(\tilde{y}_\alpha)$ of $\nu$ are also quite simple to obtain and read:
\begin{equation}
\label{mom-nu}
\tilde{y}_\alpha\,=\,\int_{\Delta}\x^\alpha\,d\x\,=\,\frac{\alpha_1{\rm !}\cdots\alpha_n{\rm !}}{(n+\sum_i\alpha_i){\rm !}},\qquad \forall\alpha\in\N^n.\end{equation}
(See e.g. Grundmann \cite{grundmann}.)

Observe that the set membership problem ``$\A\in\cc_d$", i.e., testing whether a given matrix 
$\A\in\s^n$ belongs to $\cc_d$, is an eigenvalue problem as one has to
check whether the smallest eigenvalue of $\M_d(f_\A\,\y)$ is nonnegative. Therefore, instead of using
standard packages for Linear Matrix Inequalities, one  may use powerful specialized softwares for computing eigenvalues
of real symmetric matrices.\\

We next describe an inner approximation of the convex cone 
$\cc^*$ via the hierarchy of convex cones $(\C_d^*)$, $d\in\N$, where each $\C_d^*$ is the dual cone of $\C_d$ in Theorem \ref{thmain}. 

Recall that $\Sigma[\x]_d$ is the space of polynomials that are sums of squares of polynomials of 
degree at most $d$. A matrix $\A\in\s^n$ is also identified with a vector $\a\in\R^{n(n+1)/2}$, and conversely,
with any vector $\a\in\R^{n(n+1)/2}$ is associated a matrix $\A\in\s^n$. For instance, with $n=2$,
\begin{equation}
\label{equivalence}
\A\,=\,\left[\begin{array}{cc} a &b\\ b&c\end{array}\right]\quad\leftrightarrow\quad 
\a\,=\,\left[\begin{array}{c} a\\ 2b\\ c\end{array}\right].\end{equation}
So we will not distinguish between a convex cone in $\R^{n(n+1)/2}$ and the corresponding cone in $\s^n$.

\begin{thm}
\label{thm-dual}
Let $\C_d\subset\s^n$ be the convex cone defined in (\ref{def-approxd}). Then
\begin{equation}
\label{def1-approx-dual}
\C_d^*={\rm cl}\,\left\{\left( \la\,\X,\M_d(x_ix_j\,\y)\,\ra\,\right)_{1\leq i\leq j\leq n}\::\:\X\in\s^{s(d)}_+\right\}.
\end{equation}
Equivalently:
\begin{eqnarray}
\nonumber
\C_d^*&=&{\rm cl}\,\left\{\int_{\R^n_+} \x\x^T\,\underbrace{\sigma(\x)\,d\mu(\x)}_{d\mu_\sigma(\x)}\::\: \sigma\in\Sigma[\x]_d\:\right\}\\
\label{def2-approx-dual}
&=&{\rm cl}\,\left\{\:{\rm mass}(\mu_\sigma){\rm E}_{\tilde{\mu}_\sigma}(\x\x^T)\::\: \sigma\in\Sigma[\x]_d\:\right\},
\end{eqnarray}
with $\tilde{\mu}_\sigma$ and ${\rm E}_{\tilde{\mu}_\sigma}(\x\x^T)$ as in (\ref{second}).
\end{thm}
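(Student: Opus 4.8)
The plan is to recognize $\C_d$ as the inverse image of the positive semidefinite cone under a single linear map, and then apply the standard duality identity for preimages together with the self-duality of $\s^{s(d)}_+$.

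Since the localizing matrix is linear in the coefficients of the underlying polynomial, first I would write
\[
\M_d(f_\A\,\y)\,=\,\sum_{1\le i\le j\le n}\a_{ij}\,\M_d(x_ix_j\,\y),
\]
where $\a_{ij}$ is the coefficient of $x_ix_j$ in $f_\A$, so that $\a_{ii}=a_{ii}$ and $\a_{ij}=2a_{ij}$ for $i<j$, exactly as in the vectorization (\ref{equivalence}). Thus the linear map $\Lambda:\s^n\to\s^{s(d)}$, $\A\mapsto\M_d(f_\A\,\y)$, satisfies $\C_d=\Lambda^{-1}(\s^{s(d)}_+)$. A one-line computation from the display above, matching the trace inner products on both sides, shows that its adjoint $\Lambda^*$ sends $\X\in\s^{s(d)}$ to the symmetric matrix whose $(i,j)$-entry is $\la\X,\M_d(x_ix_j\,\y)\ra$; this is precisely the array appearing in (\ref{def1-approx-dual}).

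Next I would invoke the elementary cone identity $(\Lambda^{-1}\K)^*={\rm cl}\,(\Lambda^*\K^*)$, valid for any linear map $\Lambda$ and closed convex cone $\K$. The inclusion $\Lambda^*\K^*\subseteq(\Lambda^{-1}\K)^*$ is immediate since $\la\Lambda^*(k),\A\ra=\la k,\Lambda(\A)\ra\ge0$ whenever $k\in\K^*$ and $\A\in\Lambda^{-1}\K$; taking closures uses that $(\Lambda^{-1}\K)^*$ is closed. For the reverse inclusion, if some $u\in(\Lambda^{-1}\K)^*$ were not in the closed convex cone ${\rm cl}\,(\Lambda^*\K^*)$, the separation theorem for closed convex cones would produce a $\v$ with $\la u,\v\ra<0$ and $\la k,\Lambda(\v)\ra=\la\Lambda^*(k),\v\ra\ge0$ for all $k\in\K^*$; the latter forces $\Lambda(\v)\in(\K^*)^*=\K$, hence $\v\in\Lambda^{-1}\K$ and $\la u,\v\ra\ge0$, a contradiction. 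Applying this with $\K=\s^{s(d)}_+$, which is self-dual, yields (\ref{def1-approx-dual}). The closure is genuinely needed here because the linear image $\Lambda^*(\s^{s(d)}_+)$ of the semidefinite cone need not be closed; correspondingly, no Slater-type qualification on $\C_d$ is required.

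Finally I would pass to the integral representation. Writing any $\X\succeq0$ as a Gram matrix $\X=\sum_k\g_k\g_k^T$ with $\g_k$ the coefficient vector of $g_k\in\R[\x]_d$, relation (\ref{local1}) gives
\[
\la\X,\M_d(x_ix_j\,\y)\ra\,=\,\sum_k\la\g_k,\M_d(x_ix_j\,\y)\,\g_k\ra\,=\,\sum_kL_\y(g_k^2\,x_ix_j)\,=\,L_\y(\sigma\,x_ix_j),
\]
with $\sigma:=\sum_kg_k^2\in\Sigma[\x]_d$, and conversely every $\sigma\in\Sigma[\x]_d$ is obtained this way. Because $\y$ is the moment sequence of $\mu$, (\ref{local2}) rewrites the right-hand side as $\int_{\R^n_+}x_ix_j\,\sigma\,d\mu$, so the matrix collecting these entries is exactly $\int_{\R^n_+}\x\x^T\,\sigma\,d\mu$; this is the first line of (\ref{def2-approx-dual}), and the second line follows by normalizing $d\mu_\sigma=\sigma\,d\mu$ as in (\ref{second}). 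The substantive step is the preimage duality identity with its closure; the only point demanding care is the inner-product bookkeeping in computing $\Lambda^*$, where the doubling of off-diagonal entries in (\ref{equivalence}) must be tracked so that the primal coefficient pairing and the dual matrix-entry pairing agree.
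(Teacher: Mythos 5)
Your proposal is correct and takes essentially the same route as the paper: the paper defines the right-hand set $\Delta_d$ of (\ref{def1-approx-dual}) without the closure, computes $\Delta_d^*=\C_d$ using linearity of $\A\mapsto\M_d(f_\A\,\y)$ together with self-duality of the semidefinite cone, and concludes $\C_d^*=(\Delta_d^*)^*={\rm cl}\,\Delta_d$ by the bidual theorem --- which is exactly your identity $(\Lambda^{-1}\K)^*={\rm cl}\,(\Lambda^*\K^*)$ run in the opposite direction, your separation argument being the bidual theorem unpacked. The passage to the integral representation via the Gram decomposition of $\X$ and (\ref{local2}) is identical in both proofs, including your (correct) bookkeeping of the off-diagonal doubling in (\ref{equivalence}).
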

\begin{proof}
Let
\[\Delta_d\,:=\,\left\{\left( \la\,\X,\M_d(x_ix_j\,\y)\,\ra\,\right)_{1\leq i\leq j\leq n}\::\:\X\in\s^{s(d)}_+\right\},\]
so that
\begin{eqnarray*}
\Delta^*_d&=&\left\{\a\in\R^{n(n+1)/2}\::\:\sum_{1\leq i\leq j\leq n}a_{ij}\,\la\X,\M_d(x_ix_j\,\y\ra\geq0\quad\forall\, \X\in\s^{s(d)}_+\:\right\},\\
&=&\left\{\a\in\R^{n(n+1)/2}\::\:\left\la\X,\M_d\left((\sum_{1\leq i\leq j\leq n}a_{ij}x_ix_j)\,\y\right)\right\ra\geq0\quad\forall \,\X\in\s^{s(d)}_+\:\right\},\\
&=&\{\A\in\s^n \::\:\la\X,\M_d(f_\A\,\y)\ra\geq0\quad\forall \,\X\in\s^{s(d)}_+\:\}\quad\mbox{[with $\A,\a$ as in (\ref{equivalence})]}\\
&=&\{\A\in\s^n \::\:\M_d(f_\A\,\y)\,\succeq0\:\}\,=\,\C_d.\end{eqnarray*}
And so we obtain the desired result $\C_d^*=(\Delta_d^*)^*={\rm cl}\,(\Delta_d)$.
Next, writing the singular decomposition of $\X$ as $\sum_{k=0}^s\q_k\,\q_k^T$ for some $s\in\N$ and some
vectors $(\q_k)\subset\R^{s(d)}$,
one obtains that for every $1\leq i\leq j\leq n$,
\begin{eqnarray*}
\la\,\X,\M_d(x_ix_j\,\y)\,\ra &=&\sum_{k=0}^s\la\,\q_k\q_k^T,\M_d(x_ix_j\,\y)\,\ra\,=\,
\sum_{k=0}^s\la\,\q_k,\M_d(x_ix_j\,\y)\q_k\,\ra\\
&=&\sum_{k=0}^s\int_{\R^n_+} x_ix_j \,q_k(\x)^2\,d\mu(\x)\quad\mbox{[by (\ref{local2})]}\\
&=&\int_{\R^n_+} x_ix_j\,\underbrace{\sigma(\x)\,d\mu(\x)}_{d\mu_\sigma(\x)},\end{eqnarray*}
where $\sigma(\x)=\sum_{k=0}^sq_k(\x)^2\in\Sigma[\x]_d$, and $\mu_\sigma(B)=\int_B\sigma\,d\mu$ for all Borel sets $B$.
\end{proof}
So Theorem \ref{thm-dual} states that $\C_d^*$ is the closure of the convex cone generated by
second-order moments of measures $d\mu_\sigma=\sigma d\mu$, absolutely continuous with respect to $\mu$ (hence with support on $\R^n_+$) and with density being a s.o.s. polynomial $\sigma$ of degree at most $2d$.
Of course we immediately have:
\begin{cor}
\label{cor2}
Let $\C^*_d$, $d\in\N$, be as in (\ref{def2-approx-dual}). Then $\C_d^*\subset\C^*_{d+1}$ for all $d\in\N$, and
\[\cc^*\,=\,{\rm cl}\,\bigcup_{d=0}^\infty\C^*_d.\]
\end{cor}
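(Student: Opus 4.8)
The plan is to prove Corollary \ref{cor2} by exploiting the duality already established in Theorem \ref{thm-dual} together with the nesting of the primal cones from Theorem \ref{thmain}. The two assertions to prove are the monotonicity $\C_d^*\subset\C_{d+1}^*$ and the limit identity $\cc^*={\rm cl}\,\bigcup_d\C_d^*$. The guiding principle is that taking duals reverses inclusions, so the descending chain $\C_0\supset\C_1\supset\cdots\supset\cc$ from Theorem \ref{thmain} should dualize into the ascending chain of the statement.

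For the monotonicity, I would first establish the primal nesting $\C_{d+1}\subset\C_d$ (this is already contained in Theorem \ref{thmain}, since $\M_{d+1}(f_\A\,\y)\succeq0$ forces $\M_d(f_\A\,\y)\succeq0$ as the latter is a principal submatrix of the former). Then I would simply apply the order-reversing property of the dual-cone operation: if $\K_1\subseteq\K_2$ are convex cones, then $\K_2^*\subseteq\K_1^*$. Applying this to $\C_{d+1}\subseteq\C_d$ yields $\C_d^*\subseteq\C_{d+1}^*$, which is the claimed monotonicity. This step is routine once the primal inclusion is in hand.

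For the limit identity, I would use the intersection formula $\cc=\bigcap_d\C_d$ from Theorem \ref{thmain} and dualize it. Invoking the duality rule stated in the Notation subsection, namely $(\K_1\cap\K_2)^*={\rm cl}\,({\rm conv}\,(\K_1^*\cup\K_2^*))$, extended to a countable family, gives
\[
\cc^*\,=\,\Bigl(\bigcap_{d=0}^\infty\C_d\Bigr)^*\,=\,{\rm cl}\,\Bigl({\rm conv}\,\bigcup_{d=0}^\infty\C_d^*\Bigr).
\]
Because the $\C_d^*$ form an increasing chain of convex cones, their union $\bigcup_d\C_d^*$ is already a convex cone, so the ${\rm conv}$ operation is redundant and the right-hand side collapses to ${\rm cl}\,\bigcup_d\C_d^*$, which is exactly the desired formula.

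The main obstacle I anticipate is the passage from the finite duality rule quoted in the preliminaries to its countably infinite analogue: the identity $(\bigcap_d\C_d)^*={\rm cl}\,{\rm conv}\,\bigcup_d\C_d^*$ is standard but is not literally the two-cone statement cited from Rockafellar, so I would either cite the appropriate infinite version or derive it by a short bipolar argument. Concretely, one shows $\overline{(\K^*)}^*={\rm cl}\,\K$ applied to $\K={\rm conv}\,\bigcup_d\C_d^*$: since $(\bigcup_d\C_d^*)^*=\bigcap_d(\C_d^*)^*=\bigcap_d\C_d=\cc$ (using $(\C_d^*)^*={\rm cl}\,\C_d=\C_d$ because each $\C_d$ is closed), the bipolar theorem gives ${\rm cl}\,{\rm conv}\,\bigcup_d\C_d^*=\cc^*$. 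This bipolar step, and checking that closures are handled correctly, is the only delicate point; the rest is a direct dualization of Theorem \ref{thmain}.
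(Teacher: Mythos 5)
Your proposal is correct and follows essentially the same route as the paper: dualize the identity $\cc=\bigcap_{d}\C_d$ from Theorem \ref{thmain} to get $\cc^*={\rm cl}\,({\rm conv}\,\bigcup_d\C_d^*)$, then drop the ${\rm conv}$ because the increasing union of convex cones is already convex. Your bipolar argument for the countable intersection rule and your principal-submatrix justification of $\C_{d+1}\subset\C_d$ merely fill in details the paper leaves implicit, so there is no substantive difference in approach.
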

\begin{proof}
As $\C^*_d\subset\C^*_{d+1}$ for all $d\in\N$, the result follows from
\[\cc^*\,=\,\left(\,\bigcap_{d=0}^\infty\,\C_d\,\right)^*\,=\,{\rm cl}\,\left({\rm conv}\,\bigcup_{d=0}^\infty\,\C^*_d\,\right)\,=\,
{\rm cl}\,\bigcup_{d=0}^\infty\,\C^*_d.\]
\end{proof}
In other words, $\C_d^*$ approximates $\cc^*={\rm conv}\,\{\x\x^T:\x\in\R^n_+\}$ (i.e., the convex hull of second-order moments of Dirac measures
with support in $\R^n_+$) from inside by second-order moments of measures $\mu_\sigma\ll\mu$ whose density is a s.o.s.
polynomial $\sigma$ of degree at most $2d$, and better and better approximations are obtained by letting $d$ increase.
\begin{ex}
{\rm  For instance, with $n=2$, and 
$\A=\left[\begin{array}{cc}a & b\\b&c\end{array}\right]$, 
it is known that $\A$ is copositive if and only if $a,c\geq0$ and $b+\sqrt{ac}\geq0$; see e.g. \cite{jbhu-sirev}.
Let $\mu$ be the exponential measure on $\R^2_+$ with moments defined in (\ref{mom}).
With $f_\A(\x):=ax_1^2+2bx_1x_2+cx_2^2$ and $d=1$,
the condition $\M_d(f_\A\,\y)\succeq0$ which reads
\begin{equation}
\label{mat1}
2\left[\begin{array}{ccc}
a+b+c & 3a+2b+c& a+2b+3c\\ 
3a+2b+c & 12a+6b+2c &3a+4b+3c\\
a+2b+3c&3a+4b+3c&2a+6b+12c\end{array}\right]\,\succeq\,0,\end{equation}
defines the convex cone $\C_1\subset\R^3$. It is a connected component of the basic semi-algebraic set
$\{(a,b,c): {\rm det}\,(\M_1(f_\A\,\y))\geq0\}$, that is, elements $(a,b,c)$ such that:
\begin{equation}
\label{det}
3a^3+15a^2b+29a^2c+16ab^2+50abc+29ac^2+4b^3+16b^2c+15bc^2+3c^3\,\geq\,0.\end{equation}
Alternatively,  by homogeneity, instead of $\mu$ we may 
take the probability measure $\nu$ uniformly supported on the simplex $\Delta$
and with moments $\tilde{\y}=(\tilde{y}_\alpha)$ given in (\ref{mom-nu}), in which case 
the corresponding matrix $\M_1(f_\A\,\tilde{\y})$ now reads:
\begin{equation}
\label{mat2}
\frac{1}{360}\left[\begin{array}{ccc}
30(a+b+c) & 6(3a+2b+c)& 6(a+2b+3c)\\ 
6(3a+2b+c) & 12a+6b+2c &3a+4b+3c\\
6(a+2b+3c)&3a+4b+3c&2a+6b+12c\end{array}\right]\,\succeq\,0.\end{equation}
It turns out that up to a multiplicative factor both matrices (\ref{mat1}) and (\ref{mat2}) have same determinant and so define the same 
convex cone $\C_1$ (the same connected component of (\ref{det})).
Figure 1 below displays the projection on the $(a,b)$-plane of the sets $\C_1$ and $\cc$ intersected with the unit ball.
\begin{center} 
\begin{figure}
\resizebox{0.5\textwidth}{!}
%\resizebox{\textwidth}{!}
{\includegraphics{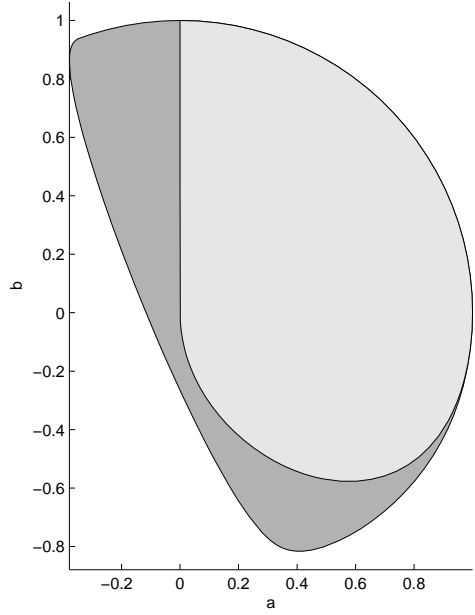}}
\caption{$n=2$: Projection on the $(a,b)$-plane of $\cc$ versus $\C_1$, both intersected with the unit ball}%\label{fig1}
\end{figure}
\end{center}
}\end{ex}

\subsection{An alternative representation of the cone $\C^*_d$}

From its definition (\ref{def1-approx-dual}) 
%(\ref{def2-approx-dual})
in Theorem \ref{thm-dual}, the cone $\C^*_d\subset\s^n$ is defined through the
matrix variable $\X\in\s^{s(d)}$ which lives in a (lifted) space of dimension $s(d)(s(d)+1)/2$ 
and with the linear matrix inequality (LMI)  constraint $\X\succeq0$ of size $s(d)$. In contrast,
the convex cone $\C_d\subset\s^n$ defined in (\ref{def-approxd}) is defined solely in terms of the entries of 
the matrix $\A\in\s^n$, that is, no projection from a higher dimensional space (or no lifting) is needed.

We next provide another explicit description on how $\C^*_d$ can be generated with no LMI constraint
and with only $s(d)$ variables, but of course this characterization is not well suited for optimization purposes.

Since every $g\in\Sigma[\x]_d$ can be written $\sum_\ell g_\ell^2$ for finitely many 
polynomials $(g_\ell)\subset\R[\x]_d$, the convex cone $\Sigma[\x]_d$ of s.o.s. polynomials can be written
\[\Sigma[\x]_d\,=\,{\rm conv}\,\{g^2\::\:g\in\R[\x]_d\:\}.\]
Next, for $g\in\R[\x]_d$, with vector of coefficients $\g=(g_\alpha)\in\R^{s(d)}$, let
$\g^{(2)}=(g^{(2)}_\alpha)\in\R^{s(2d)}$ be the vector of coefficients of $g^2$, that is,
\[g(\x)=\sum_{\alpha\in\N^n_d}g_\alpha\,\x^\alpha \quad\rightarrow\quad g(\x)^2\,=\,\sum_{\alpha\in\N^n_{2d}}g^{(2)}_\alpha\,\x^\alpha.\]
Notice that for each $\alpha\in\N^n_{2d}$, $g^{(2)}_\alpha$ is quadratic in $\g$.
For instance, with $n=2$ and $d=1$, $g(\x)=g_{00}+g_{10}x_{1}+g_{01}x_{2}$ with $\g=(g_{00},\,g_{10},g_{01})^T\in\R^{s(1)}$, and so
\[\g^{(2)}\,=\,(g_{00}^2,\, 2g_{00}g_{10},\,2g_{00}g_{01},\,g_{10}^2,\,2g_{10}g_{01},\,g_{02}^2)^T\in\R^{s(2)}.\]

Next, for $g\in\R[\x]_d$ and every $1\leq i\leq j\leq n$, let $\G_d=(G_{ij})\in\s^n$ be defined by:
\begin{equation}
\label{definal}
G_{ij}\,:=\,\int_{\R^n}g(\x)^2\,x_ix_j\,d\mu(\x)\,=\,\sum_{\alpha\in\N^n_{2d}}g^{(2)}_\alpha\,(\alpha_i+1){\rm !}(\alpha_j+1){\rm !}\prod_{k\neq i,j}\alpha_k{\rm !},
\end{equation}
for all $1\leq i\leq j\leq n$. We can now describe $\C^*_d$.
\begin{cor}
\label{cor3}
Let $\G_d\in\s^n$ be as in (\ref{definal}). Then:
\begin{equation}
\label{cor3-1}
\C^*_d\,=\,{\rm cl}\,\left({\rm conv}\,\{\G_d\::\: \g\in\R^{s(d)}\:\}\,\right).\end{equation}
\end{cor}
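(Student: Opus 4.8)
The plan is to read the statement directly off the representation (\ref{def2-approx-dual}) of $\C^*_d$ together with the description $\Sigma[\x]_d={\rm conv}\{g^2:g\in\R[\x]_d\}$, by exploiting linearity of the second-moment map. First I would introduce the map
\[
\Phi:\R[\x]_{2d}\to\s^n,\qquad \Phi(h)\,:=\,\int_{\R^n_+}\x\x^T\,h(\x)\,d\mu(\x),
\]
which sends a polynomial of degree at most $2d$ to the matrix of its $\mu$-weighted second moments. Since integration against $\mu$ is linear in the integrand and the weights $x_ix_j$ are fixed, $\Phi$ is linear. By definition (\ref{def2-approx-dual}) we then have $\C^*_d={\rm cl}\,\bigl(\Phi(\Sigma[\x]_d)\bigr)$.

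Next I would push the convex hull through $\Phi$. Because every $\sigma\in\Sigma[\x]_d$ decomposes as a finite sum $\sigma=\sum_\ell g_\ell^2$ with $g_\ell\in\R[\x]_d$, linearity gives $\Phi(\sigma)=\sum_\ell\Phi(g_\ell^2)$; more generally, for a linear map the image of a convex hull is the convex hull of the image, so
\[
\Phi(\Sigma[\x]_d)\,=\,\Phi\bigl({\rm conv}\{g^2:g\in\R[\x]_d\}\bigr)\,=\,{\rm conv}\{\Phi(g^2):g\in\R[\x]_d\}.
\]
It then remains to identify the generators $\Phi(g^2)$. Writing $g(\x)^2=\sum_{\alpha\in\N^n_{2d}}g^{(2)}_\alpha\x^\alpha$ and integrating term by term against the explicit moments (\ref{mom}), the $(i,j)$ entry of $\Phi(g^2)$ is exactly $\int_{\R^n_+}g^2\,x_ix_j\,d\mu=G_{ij}$ as in (\ref{definal}); hence $\Phi(g^2)=\G_d$, with $g$ ranging over $\R[\x]_d$ through its coefficient vector $\g\in\R^{s(d)}$. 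Substituting back yields the claim (\ref{cor3-1}).

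There is no genuine obstacle here: the statement is essentially a reformulation of Theorem \ref{thm-dual} obtained by changing the parametrization of the sum-of-squares density from $\sigma$ to the single polynomial $g$. The only two points that need a line of verification are (i) the elementary fact that a linear map commutes with forming convex hulls, which I would record explicitly rather than merely quote, and (ii) the entrywise identity $\Phi(g^2)_{ij}=G_{ij}$ via (\ref{mom}), which is routine. A small point of care is that in both (\ref{def2-approx-dual}) and (\ref{cor3-1}) the closure is taken only at the outermost level, so the equality $\Phi(\Sigma[\x]_d)={\rm conv}\{\G_d:\g\in\R^{s(d)}\}$ holds between the \emph{un-closed} sets and no interchange of the closure with either the convex hull or $\Phi$ is required.
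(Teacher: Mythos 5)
Your proof is correct, and it follows a slightly different (and more economical) route than the paper's. The paper proves Corollary~\ref{cor3} starting from the first formulation (\ref{def1-approx-dual}) of Theorem~\ref{thm-dual}: it decomposes $\X\succeq0$ as $\sum_k\g_k\g_k^T$ to get the inclusion of the un-closed sets into ${\rm conv}\,\{\G_d:\g\in\R^{s(d)}\}$, and conversely assembles $\X=\sum_k\lambda_k\g_k\g_k^T$ from a convex combination $\sum_k\lambda_k\G_d^k$ to get the reverse inclusion --- in effect re-deriving, inside the corollary's proof, the same Gram-matrix/s.o.s.\ correspondence that was already used to pass from (\ref{def1-approx-dual}) to (\ref{def2-approx-dual}). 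You instead take (\ref{def2-approx-dual}) as the starting point, so that correspondence is inherited rather than repeated, and the whole argument collapses to two facts: $\Sigma[\x]_d={\rm conv}\,\{g^2:g\in\R[\x]_d\}$ (which the paper itself records just before the corollary) and the commutation of a linear map with convex hulls applied to $\Phi(h)=\int_{\R^n_+}\x\x^T h\,d\mu$. The two proofs have identical mathematical content, but yours isolates where that content lives (in Theorem~\ref{thm-dual} and in the conv-of-squares description of $\Sigma[\x]_d$), whereas the paper's is self-contained at the level of (\ref{def1-approx-dual}) and keeps the lifted matrix variable $\X$ explicit throughout. Your remark on closures --- that the equality holds between the un-closed sets, so no interchange of closure with ${\rm conv}$ or with $\Phi$ is needed --- is exactly right and matches how the paper handles it.
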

\begin{proof}
From Theorem \ref{thm-dual} 
\[\C_d^*={\rm cl}\,\left\{\left( \la\,\X,\M_d(x_ix_j\,\y)\,\ra\,\right)_{1\leq i\leq j\leq n}\::\:\X\in\s^{s(d)}_+\right\}.\]
As in the proof of Theorem \ref{thm-dual}, writing $\X\succeq0$ as $\sum_{k=1}^s\g_k\g_k^T$ for some vectors 
$(\g_k)\subset\R^{s(d)}$ (and associated polynomials $(g_k)\subset\R[\x]_d$),
\[\la\,\X,\M_d(x_ix_j\,\y)\,\ra\,=\,\sum_{k=1}^s \int_{\R^n_+} x_ix_jg_k(\x)^2\,d\mu,
%&=&\sum_{k=1}^s \sum_{\alpha\in\N^n_{2d}}g^{(2)}_{k\alpha}\,(\alpha_i+1){\rm !}(\alpha_j+1){\rm !}\prod_{k\neq i,j}\alpha_k{\rm !}
\,=\,\sum_{k=1}^s G^k_{ij},\]
for all $1\leq i\leq j\leq n$, where $G^k_{ij}$ is as in (\ref{definal}) (but now associated with $g_k$ instead of $g$). Hence
\[\left(\la\,\X,\M_d(x_ix_j\,\y)\,\ra\,\right)_{1\leq i\leq j\leq n}=\sum_{k=1}^s \G^k_d\,\in\,{\rm conv}\,\{\G_d\::\: \g\in\R^{s(d)}\:\}.\]
Hence $\cc^*\subseteq {\rm cl}\,\left({\rm conv}\,\{\G_d\::\: \g\in\R^{s(d)}\:\}\right)$.

Conversely, let $h\in {\rm conv}\,\{\G_d\::\: \g\in\R^{s(d)}\:\}$, i.e., for some integer $s$ and polynomials $(g_k)\subset\R[\x]_d$,
and for all $1\leq i\leq j\leq n$, 
\begin{eqnarray*}
h_{ij}\,=\,\sum_{k=1}^s \underbrace{\lambda_k}_{>0}\,G^k_{ij}&=&\int_{\R^n_+}\left(\sum_{k=1}^s\lambda_k\,g_k^2(\x)\right)\,x_ix_j\,d\mu(\x)\\
&=&\la \X,\M_d(x_ix_j\,\y)\ra 
\end{eqnarray*}
where $\X=\sum_{k=1}^s\lambda_k\,\g_k\g_k^T\in\s^{s(d)}_+$. Hence 
${\rm cl}\,\left({\rm conv}\,\{\G_d\::\: \g\in\R^{s(d)}\:\}\right)\subseteq\cc^*$.
\end{proof}
The characterization (\ref{cor3-1}) of $\C^*_d$ should be compared with 
the characterization ${\rm conv}\,\{\x\x^T\,:\:\x\in\R^n_+\}$ of $\cc^*$.
\begin{ex}
With $n=2$ and $d=1$, $\G_1$ reads:
{\scriptsize
\[\left[\begin{array}{cc}
2g^2_{00}+12g_{10}(g_{00}+g_{01})+4g_{01}(g_{00}+g_{01})+24g_{10}^2&
g_{00}^2+4g_{00}(g_{10}+g_{01})+6(g_{10}^2+g_{01}^2)+8g_{10}g_{01}\\
g_{00}^2+4g_{00}(g_{10}+g_{01})+6(g_{10}^2+g_{01}^2)+8g_{10}g_{01}&
2g^2_{00} +4g_{10}(g_{00}+g_{10})+12g_{01}(g_{00}+g_{10})+24g_{01}^2\end{array}\right]\]
}
\end{ex}

\subsection{$\k$-copositive and $\k$-completely positive matrices}

Let $\k\subset\R^n$ be a closed convex cone and let 
$\cc_\k$ be the convex cone of $\k$-copositive matrices, i.e., matrices $\A\in\s^n$ such that $\x^T\A\x\geq0$ on $\k$.
Its dual cone $\cc^*_\k\subset\s^n$ is the cone of $\k$-completely positive matrices.

Then one may define a hierarchy of convex cones $(\C_{\k})_d$ and $(\C^*_{\k})_d$, $d\in\N$, formally exactly as in Theorem \ref{thmain}
and Theorem \ref{thm-dual}, but now $\y$ is the moment sequence of a finite Borel measure 
$\mu$ with $\supmu=\k$ (instead of $\supmu=\R^n_+$ in (\ref{mom})), i.e., $\y=(y_\alpha)$, $\alpha\in\N^n$, with:
\begin{equation}
\label{newmom}
y_\alpha\,:=\,\int_\k\,\x^\alpha\,d\mu,\qquad \forall\alpha\in\N^n\quad\mbox{ (and where $\supmu=\k$)}.
\end{equation}

And so with $\y$ as in (\ref{newmom}) Theorem \ref{thmain} and \ref{thm-dual}, as well as Corollary \ref{cor2}, are still valid.
(In (\ref{def2-approx-dual} replace $\int_{\R^n_+}$ with $\int_\k$). Therefore,
\[\cc_\k\,=\,\bigcap_{d=0}^\infty (C_{\k})_d\quad\mbox{ and }\quad \cc^*_\k\,=\,{\rm cl}\,\bigcup_{d=0}^\infty(\C_{\k})_d^*.\]
But of course, for practical implementation, one need to know the sequence $\y=(y_\alpha)$, $\alpha\in\N^n$, which was easy when $\k=\R^n_+$.
For instance, if $\k$ is a polyhedral cone, by homogeneity of $f_\A$, one may equivalently 
consider a compact base of $\k$ (which is a polytope $\k'$), and take for $\mu$ the Lebesgue measure on $\k'$.
Then all  moments of $\mu$ can be calculated exactly. The same argument works for every
convex cone $\k$ for which one may compute all moments of a finite Borel measure $\mu$
whose support is a compact base of $\k$.

\section*{Acknowledgement}
The author wishes to thank two anonymous referees for their very helpful remarks and suggestions 
to improve the initial version of this note.

\end{document}